\theoremstyle{plain}
\newtheorem{theorem}{Theorem}
\theoremstyle{definition}
\newtheorem{example}{Example}
\newtheorem{question}{Question}
\theoremstyle{remark}
\begin{document}

\begin{center}\Large
\textbf{On ECP-Groups}

\normalsize

\smallskip
Viachaslau I. Murashka

 \{mvimath@yandex.ru\}

Faculty of Mathematics and Technologies of Programming,

 Francisk Skorina Gomel State University,  Gomel 246019, Belarus\end{center}

\begin{abstract}
  According to T. Foguel  a subgroup $H$ of a group $G$ is called conjugate-permutable if $ HH^x=H^xH$ for every $x\in G$. Mingyao Xu and Qinhai Zhang studied finite groups with every subgroup conjugate-permutable (ECP-groups) and asked three questions about them. We gave the answers on these questions. In particular, every group of exponent 3 is ECP-group, there exist non-regular ECP-$3$-groups and the class of all finite ECP-groups is neither formation nor variety. \end{abstract}

 \textbf{Keywords.} Finite group;   conjugate-permutable subgroup; regular group;  ECP-group.

\textbf{AMS}(2010). 20D15  (Primary)  20D35 (Secondary).

\section*{Introduction}

Recall    \cite{Foguel1997} that a subgroup $H$  of a group $G$   is called conjugate-permutable if $HH^x=H^xH$ for every $x\in G$. A group is called ECP-group \cite{Xu2005} (resp. CCP-group \cite{Foguel1999}) if every its (resp. cyclic) subgroup is conjugate-permutable. The class of all ECP-groups is rather interesting. It includes the classes of groups all whose subgroups are 2-subnormal and all whose subgroups are permutable.
In \cite{Xu2005} it was proved that  a finite group $G$ is an ECP-group if and only if $G$ is nilpotent and every Sylow $p$-subgroup of $G$ is an ECP-group. This result reduces the study of ECP-groups to the study of ECP-$p$-groups. In this case
Mingyao Xu and Qinhai Zhang asked the following questions:

\begin{question}[{\cite[Question 3.9]{Xu2005}}]\label{q1}
  Is every finite group of exponent 3 an ECP-group?
\end{question}

\begin{question}[{\cite[Question 3.12]{Xu2005}}]\label{q2}
  For $p=3$, is every finite ECP-$p$-group regular?
\end{question}

\begin{question}[{\cite[Question 3.13]{Xu2005}}]\label{q3}
  Does the class of all finite ECP-groups form a variety or a formation?
\end{question}

The aim of this paper is to give the answers on them.

\section{Main Results}

The positive answer on Question \ref{q1} gives

\begin{theorem}
  Every group of exponent 3 is an ECP-group.
\end{theorem}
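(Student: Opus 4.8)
The plan is to reduce the problem to a finite nilpotent $3$-group of class at most $3$, dispose of the class‑$2$ part, and then settle the class‑$3$ part by a commutator computation that the exponent‑$3$ identities keep under control.

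\textbf{Reduction.} Fix $H\le G$ and $x\in G$; recall that a product $ST$ of subgroups is a subgroup precisely when $ST=TS$, so it suffices to show $HH^{x}=H^{x}H$. If $H_{0}$ runs over the finitely generated subgroups of $H$, then $HH^{x}=\bigcup_{H_{0}}H_{0}H_{0}^{x}$ and $H^{x}H=\bigcup_{H_{0}}H_{0}^{x}H_{0}$, each $H_{0}H_{0}^{x}$, $H_{0}^{x}H_{0}$ being computed inside the finitely generated, hence (by Burnside) finite, group $\langle H_{0},x\rangle$. So I may assume $G=\langle H,x\rangle$ is finite of exponent $3$. Every $2$-generated subgroup of $G$ has order dividing $27$, hence class at most $2$, so $[a,b,b]=1$ for all $a,b\in G$; consequently $G$ is nilpotent of class at most $3$, $G'$ is abelian, $\gamma_{3}(G)$ is central of exponent $3$, and $(a,b,c)\mapsto[a,b,c]$ is a trilinear, alternating map $G\times G\times G\to\gamma_{3}(G)$ (in particular $[a,b,c]=[c,b,a]^{-1}$).

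\textbf{The easy part.} If $\gamma_{3}(G)=1$ then $[H,H\gamma_{2}(G)]=[H,H]\le H$ and $[H\gamma_{2}(G),G]\le\gamma_{2}(G)$, so $H\trianglelefteq H\gamma_{2}(G)\trianglelefteq G$: thus $H$ is subnormal of defect at most $2$, which already forces $HH^{x}=\langle H,H^{x}\rangle=H^{x}H$. In general set $Z=\gamma_{3}(G)$; applying this to $\bar G=G/Z$ gives $\bar H\bar H^{\bar x}=\bar H^{\bar x}\bar H$, i.e. $HH^{x}Z=H^{x}HZ$ as subsets of $G$. Since $(HH^{x})^{-1}=H^{x}H$, to finish it is enough to show $H^{x}H\subseteq HH^{x}$, that is, $h^{x}h'\in HH^{x}$ for all $h,h'\in H$.

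\textbf{The core, and the main obstacle.} Put $\hat h=h^{h'}\in H$; a short computation using that $\gamma_{3}(G)$ is central yields $h^{x}h'=h'\,\hat h^{\,x}\,z$ with $z=[h',x,h]\in Z$, which only re-proves $h^{x}h'\in(HH^{x})Z$. The genuine difficulty is to absorb the central factor $z$: one must replace the pair $(h',\hat h)$ by another pair in $H\times H$ with the same product modulo $Z$ (so the available adjustments lie in $\gamma_{3}(H)=H\cap Z$) in such a way that the leftover correction, a priori only an element of $\gamma_{3}(G)$, is forced into $H$. This is exactly where the exponent‑$3$ structure must be exploited: the alternating trilinear identities for $[\,\cdot,\cdot,\cdot\,]$, together with the relations $(gh)^{3}=1$, pin the obstruction down enough to be killed. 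Alternatively, in the finite situation one can argue by cardinalities: it suffices to show $|\langle H,H^{x}\rangle|\le|H|^{2}/|H\cap H^{x}|=|HH^{x}|$, and then $HH^{x}\subseteq\langle H,H^{x}\rangle$ forces $HH^{x}=\langle H,H^{x}\rangle$, hence a subgroup. Everything before this last step is routine; I expect the central‑obstruction computation to be where the real work lies.
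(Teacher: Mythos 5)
Your proposal sets up a plausible framework (reduction to finite groups via local finiteness of exponent-$3$ groups, nilpotency of class at most $3$, the class-$\le 2$ case via $2$-subnormality, and reduction to showing $h^xh'\in HH^x$), but it stops exactly at the decisive point. After computing $h^xh'=h'\,\hat h^{\,x}z$ with $z=[h',x,h]\in\gamma_3(G)$, you have only re-established $H^xH\subseteq HH^x\gamma_3(G)$, which is the content of your ``easy part'' and says nothing new. The element $z$ is central in $G$ but need not lie in $H$ or in $H^x$, so there is no a priori reason it can be absorbed by re-choosing the pair in $H\times H^x$; you acknowledge this (``the genuine difficulty is to absorb the central factor $z$'') and then assert, without any computation, that the exponent-$3$ identities ``pin the obstruction down enough to be killed.'' That assertion is the entire theorem: a general nilpotent $3$-group of class $3$ is \emph{not} an ECP-group (the paper's own Theorem 3 exhibits a non-regular ECP counterexample landscape, and Theorem 2 gives class-$2$-by-central failures of conjugate-permutability), so something specific to exponent $3$ must actually be computed, not just invoked. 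The alternative cardinality route you sketch ($|\langle H,H^x\rangle|\le |H|^2/|H\cap H^x|$) is likewise just a restatement of what must be proved. As it stands the proposal is a proof outline with the core step missing.

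For comparison, the paper's proof needs none of this machinery. From $(yz)^3=1$ one gets the identity $yzy=z^{-1}y^{-1}z^{-1}$ for all $y,z\in G$, and a four-line rewriting then shows directly that for $a,b\in H$,
\[
ab^x=(a^{-1}ba)^x\,(bab^{-1})\in H^xH,
\]
with the symmetric identity giving the reverse inclusion. This is elementary, requires no finiteness, no Burnside local finiteness, and no nilpotency class bounds, and it completely bypasses the central obstruction you ran into. You may find it instructive to see whether your element $z=[h',x,h]$ is exactly what gets absorbed when one replaces the pair $(h',\hat h)$ by $(a^{-1}ba,\,bab^{-1})$ as in the paper's identity.
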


\begin{proof}
  Let $H$ be a subgroup of a group  $G$ of exponent 3, $x\in G$ and $a, b\in H$. Let $y, z\in G$. Then $(yz)(yz)(yz)=(yzy)(zyz)=1$. Hence $yzy=z^{-1}y^{-1}z^{-1}$ holds for all $y, z\in G$.  Now
   \begin{align*}ab^x&=ax^{-1}bx=(x^{-1}x)ax^{2}bx=x^{-1}(xax)(xbx)\\
   &=x^{-1}(a^{-1} x^{-1}a^{-1})(b^{-1}x^{-1}b^{-1})=x^{-1}a^{-1} (x^{-1}a^{-1}b^{-1}x^{-1})b^{-1}\\
   &=x^{-1}a^{-1}(ba x ba)b^{-1}=(a^{-1}ba)^x(bab^{-1}).
      \end{align*}
By the same arguments one can show that $ab^{x^2}=(a^{-1}ba)^{x^2}(bab^{-1})$. From $x^3=1$ it follows that $a^xb=(a^{-1}ba)(bab^{-1})^x$.  It means that $HH^x\subseteq H^xH$ and $H^xH\subseteq HH^x$. So $HH^x=H^xH$. It means that $H$ is  a conjugate-permutable subgroup of $G$. Thus $G$ is an ECP-group.
\end{proof}

The negative answer on Question \ref{q2} gives

\begin{theorem}
  The classes of all finite CCP-groups and ECP-groups are not closed under taking direct products. Hence they are not formations and not varieties.
\end{theorem}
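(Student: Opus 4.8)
The plan is to reduce the ``Hence'' clause to the direct–product statement, and then to defeat direct–product closure with an explicit pair of groups.

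First I would record the soft part: every formation is closed under finite subdirect products, hence under finite direct products (apply the defining closure property to $G=A\times B$ with $N_1=1\times B$, $N_2=A\times 1$, so $N_1\cap N_2=1$ and $G/N_1\cong A$, $G/N_2\cong B$), and every variety is closed under arbitrary direct products. So the last sentence of the theorem is immediate once one knows that the class of finite CCP-groups (resp.\ ECP-groups) is not closed under $\times$; and for that it suffices to exhibit, for the class $\mathcal{X}$ in question, finite groups $A,B\in\mathcal{X}$ with $A\times B\notin\mathcal{X}$. Since every ECP-group is a CCP-group, a single pair of finite ECP-groups $A,B$ with $A\times B$ not a CCP-group would settle both statements at once; I would aim for that, and otherwise treat the two classes with two separate pairs.

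For the counterexample I would take $A=B=P$ (or $A=P$ and $B$ a suitable cyclic $3$-group) where $P$ is a non-regular ECP-$3$-group --- such a $P$ exists by the negative answer to Question~\ref{q2}, and, being non-regular, $P$ has nilpotency class $\geq 3$, so $P'\not\leq Z(P)$. This last point is essential: if $K'\leq Z(K)$ then for every subgroup $H$ the map $h\mapsto[h,x]$ is a homomorphism, so $[H,x]$ is a central subgroup and $HH^{x}=H\,[H,x]$ is a subgroup; hence such $K$, and all their direct products, are ECP, and no counterexample can be manufactured from them. With $P$ of class $\geq 3$ in hand, I would exhibit inside $G=P\times P$ a subgroup $H$, generated by ``diagonal'' elements mixing the two factors so that $|H|$ is strictly smaller than $|P|^{2}$, together with an element $x\in G$ for which $HH^{x}$ is not a subgroup of $G$; concretely one verifies $|\langle H,H^{x}\rangle|>|H|\,|H^{x}|\,/\,|H\cap H^{x}|$, i.e.\ that the product set has too few elements to be closed. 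Then $HH^{x}\neq H^{x}H$, so $H$ is not conjugate-permutable in $G$, while $A=B=P$ are ECP.

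The difficulty is concentrated in two places. First, one must verify that $P$ genuinely is an ECP-group: this is not visible from crude invariants and has to be checked for every subgroup (or deduced from a structural description of ECP-$p$-groups), and it is essentially the same work that underlies the answer to Question~\ref{q2}. Second, the pair $(H,x)$ must be chosen with care: for most naive choices $\langle H,H^{x}\rangle$ turns out to have order exactly $|H|\,|H^{x}|/|H\cap H^{x}|$, so that $HH^{x}$ is after all a subgroup and nothing is proved. What is needed is to arrange that $[H,x]$ genuinely escapes $H\cap H^{x}$, which is possible precisely because $P$ has been taken irregular (of class $\geq 3$), so that conjugation can push $H$ outside the range where permutability is forced. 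Pinning down a concrete $P$ and a concrete $(H,x)$ that clear both hurdles is the heart of the argument.
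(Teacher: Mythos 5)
You have the right skeleton: the reduction of the ``Hence'' clause is exactly right (formations are closed under subdirect, hence finite direct, products; varieties under arbitrary products), and your observation that one pair of ECP-groups whose product fails to be a CCP-group kills both classes at once is precisely the economy the paper exploits. Your remark that factors of class $\leq 2$ can never work (since $HH^{x}=H[H,x]$ is then a subgroup) is also correct and correctly identifies where the difficulty must live. But the proposal stops exactly where the theorem begins: no concrete pair of ECP-groups and no concrete non-conjugate-permutable subgroup of their product is produced, and you yourself flag the construction of $(P,H,x)$ as ``the heart of the argument.'' Nothing in the proposal guarantees that your candidate $G=P\times P$, with $P$ the non-regular ECP-group of order $81$, actually contains such a pair $(H,x)$ --- irregularity of the factors is necessary for the obstruction you describe but is nowhere shown to be sufficient, so the existence claim at the core of the theorem remains unproved.

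It is also worth noting that the paper sidesteps the first of the two difficulties you identify (verifying that the factors are ECP) rather than confronting it. Its factors are not the non-regular group from Question~\ref{q2} but two isomorphic metacyclic $3$-groups of order $243$, namely $\langle a,b\mid a^{27}=b^{9}=1,\ a^{b}=a^{4}\rangle$; being products of two cyclic groups, all of their subgroups are permutable by Huppert's Satz~15, so they are ECP-groups with no computation at all. The failing subgroup in the product is the cyclic ``diagonal'' subgroup $K=\langle a^{3}b^{2}c^{3}d\rangle$ with $x=abcd$, and non-permutability is certified by exhibiting an explicit element of $K^{x}K\setminus KK^{x}$ via a short exhaustion over the finitely many factorizations $k^{i}(k^{x})^{j}$ --- a more elementary certificate than your order count $|\langle H,H^{x}\rangle|>|H|\,|H^{x}|/|H\cap H^{x}|$, though that criterion would also be valid. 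To complete your proof you must either carry out the analogue of this computation for your $P\times P$ (and separately verify that $P$ is ECP), or adopt factors, as the paper does, for which ECP-ness is free.
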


\begin{proof}
  Let
$$G=\langle a, b, c, d\mid a^{27}=c^{27}=b^9=d^9=[a, c]=[a, d]=[b,c]=[b, d]=a^ba^{-4}=c^dc^{-4}=1\rangle. $$
Let $H_1=\langle a, b\rangle$ and $H_2=\langle c, d\rangle$.  Then $G=H_1\times H_2$ and $H_1\simeq H_2$. Note that $H_i$ is a product of two cyclic groups. Hence all its subgroups are permutable \cite[Satz 15]{Huppert1953}. So $H_i$ is a ECP-group.
Let \begin{multline*}K=\langle k\rangle=\langle a^3b^2c^3d\rangle=\\\{e, a^{3}b^{2}c^{3}d,a^{15}b^{4}c^{24}d^{2},a^{9}b^{6}c^{9}d^{3},a^{12}b^{8}c^{12}d^{4}, a^{24}b^{10}c^{6}d^{5}, a^{18}b^{12}c^{18}d^{6}, a^{21}b^{14}c^{21}d^{7}, a^{6}b^{16}c^{15}d^{8}\}.
\end{multline*}
 and $x=abcd$. Then
\begin{multline*}
  K^x=\langle k^x\rangle=\langle a^{15}b^2c^9d\rangle=\\
  \{e, a^{15}b^{2}c^{9}d,a^{21}b^{4}c^{18}d^{2},a^{18}b^{6}c^{0}d^{3},a^{6}b^{8}c^{9}d^{4}, a^{12}b^{10}c^{18}d^{5}, a^{9}b^{12}c^{0}d^{6}, a^{24}b^{14}c^{9}d^{7}, a^{3}b^{16}c^{18}d^{8}\}.
\end{multline*}
Note that $y=(a^{15}b^{2}c^{9}d)(a^{3}b^{2}c^{3}d)=a^0b^4c^3d^2\in K^xK$. Note that $c^9\in \mathrm{Z}(G)$. Assume that $y\in KK^x$. Note that if we write $y=k^i(k^x)^j$  in the form $a^{q_1}b^{q_2}c^{q_3}d^{q_4}$, then $q_4\equiv i+j \mod 9$ and $q_3\equiv q_5 \mod 9$ where $q_5$ is a power of $c$ in $k^i$.   It means that $i\equiv 2-j \mod 9$ and $q_5\equiv 3 \mod 9$. Hence $(i, j)\in\{(1,1), (4, 7), (7,4)\}$. But $q_3\neq 3$ for  $(i, j)\in\{(1,1), (4, 7)\}$. Note that $(a^{21}b^{14}c^{21}d^{7})(a^{6}b^{8}c^{9}d^{4})=a^{18}b^4c^3d^2\neq a^0b^4c^3d^2$. Thus $y\not\in KK^x$, the contradiction. Hence $K$ is a not conjugate permutable subgroup of $G$.
\end{proof}

From this result it follows that the first statement of \cite[Lemma 2.11]{Foguel1999} is false.

The negative answer on Question \ref{q3} gives

\begin{theorem}
  There exists a finite non-regular 3-group.
\end{theorem}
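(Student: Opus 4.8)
The plan is to answer the statement by exhibiting an explicit finite $3$-group and checking its non-regularity against the classical criterion of P. Hall. Recall that a finite $p$-group $G$ is regular if for all $a,b\in G$ the element $(ab)^p a^{-p}b^{-p}$ lies in the subgroup generated by the $p$-th powers of the elements of $\langle a,b\rangle'$; a standard theorem of Hall then guarantees that in a regular $p$-group the set $\Omega_1(G)=\{g\in G:g^p=1\}$ of elements of order dividing $p$ is a subgroup. I would use the contrapositive of this last fact: if I can produce two elements of order $3$ whose product has order $9$, then $\Omega_1(G)$ fails to be closed under multiplication, so $G$ is not regular.

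First I would take $G$ to be the regular wreath product $C_3\wr C_3$, of order $3^4=81$ (which is in fact the smallest non-regular $3$-group). Writing the base group as $B=\langle x_1\rangle\times\langle x_2\rangle\times\langle x_3\rangle\cong C_3^3$ and letting $t$ be a generator of the complementing $C_3$ acting by $t x_i t^{-1}=x_{i+1}$ with indices read modulo $3$, every element of $B$ as well as $t$ itself has order $3$. I would then set $a=x_1$ and $b=t$, both of order $3$, and compute the third power of their product. Using the action of $t$ one obtains
\[
(x_1 t)^3 = x_1\,(t x_1 t^{-1})\,(t^2 x_1 t^{-2})\,t^3 = x_1 x_2 x_3 .
\]
Since $x_1 x_2 x_3\neq 1$ while $(x_1 x_2 x_3)^3=1$, the element $ab=x_1 t$ has order $9$. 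Thus $a,b\in\Omega_1(G)$ but $ab\notin\Omega_1(G)$, so $\Omega_1(G)$ is not a subgroup, and therefore $G$ is a finite non-regular $3$-group, which is exactly what is asserted.

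The routine ingredients here are the order count and the single power computation displayed above; the only point requiring care is selecting the correct formulation of regularity and correctly invoking Hall's theorem that regularity forces $\Omega_1(G)$ to be a subgroup. The one conceptual check, if one wished to argue straight from the definition instead of through $\Omega_1$, would be to verify that the discrepancy $x_1 x_2 x_3$ cannot be absorbed into a $p$-th power coming from $\langle a,b\rangle'$. This is immediate in the present example, because $\langle a,b\rangle=G$, the derived subgroup $G'$ lies inside the elementary abelian base $B$ and hence has exponent $3$, so the subgroup generated by the cubes of commutators is trivial and cannot account for the nontrivial value $x_1 x_2 x_3$. Passing through the $\Omega_1$ criterion avoids even this verification, so I would present that route as the main line of argument and keep the direct computation as a remark.
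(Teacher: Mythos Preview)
Your argument is a correct and clean proof of the sentence as literally written, but that sentence is a slip of the pen: read in context, the theorem is meant to answer Question~\ref{q2} negatively, i.e.\ to exhibit a finite \emph{ECP}-$3$-group that is not regular. The bare existence of a non-regular $3$-group is classical (and your wreath-product example is the textbook one), so the content of the result lies entirely in the ECP property, which your write-up never addresses.

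The paper's proof reflects this: it fixes a concrete $3$-group of order $81$ (with GAP Id $[81,10]$), first checks by computer that every subgroup is conjugate-permutable, and only then verifies non-regularity by the same mechanism you use in your ``direct'' remark, namely showing that $(ab)^3a^{-3}b^{-3}\neq 1$ while $G'$ has exponent $3$, so that the Hall regularity condition fails. Thus on the non-regularity side your argument and the paper's are essentially the same; the genuine gap is the missing ECP verification. If you want to salvage your example you would have to prove that every subgroup of $C_3\wr C_3$ is conjugate-permutable, and note that the paper's group $[81,10]$ is not a priori your wreath product, so you cannot simply borrow the paper's GAP check.
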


\begin{proof}
  Here we describe a brute force test for a group to be an ECP-group in GAP. The function ``ArePermutableSubgroups'' is taken from the package ``permut'' \cite{ABB014}. Note that if a subgroup is conjugate-permutable, then all its conjugates also have this property.

\medskip

IsECPGroup:=function(G)

local S,b,a;

\quad S:=ConjugacyClassesSubgroups(G);

\quad for a in S do

\quad\quad for b in ConjugateSubgroups(G,a[1]) do

\quad\quad\quad if (not ArePermutableSubgroups(b,a[1])) then

\quad\quad\quad\quad return false;

\quad\quad\quad fi;

\quad\quad od;

\quad od;

\quad return true;

end;;

\medskip

If in the previous algorithm we replace the line ``S:=ConjugacyClassesSubgroups(G);'' by the line ``S:=Filtered(ConjugacyClassesSubgroups(G),x-$>$IsCyclic(x[1]));'', then we will obtain the function which tests wether or note a group is a CCP-group.

 Let $G=\langle a, b\rangle$ where
 \begin{align*}
  a=&(1,2,6,5,9,18,15,24,37)(3,20,70,12,41,79,29,62,53)(4,23,57,14,44,17,31,8,36)\\
  &(7,33,66,21,54,27,42,71,48)(10,58,78,26,73,51,47,80,68)(11,61,16,28,75,34,49,40,55)\\
  &(13,63,56,30,22,72,50,43,35)(19,67,25,39,77,46,60,81,65)(32,64,38,52, 76,59,69,45,74),
\\
 b=&(1,3,10,15,29,47,5,12,26)(2,7,19,24,42,60,9,21,39)(4,11,25,31,49,65,14,28,46)\\
 &(6,16,32,37,55,69,18,34,52)(8,20,38,44,62,74,23,41,59)(13,27,45,50,66,76,30,48,64)\\
 &(17,33,51,57,71,78,36,54,68)(22,40,58,63,75,80,43,61,73)(35,53,67,72,79,81,56,70,77).
 \end{align*}
Then $G$ is a ECP-group (it can be checked with the function ``IsECPGroup(G)'' in GAP). Note that the exponent of the derived subgroup of $G$ is 3 (it can be checked with the function ``Exponent(DerivedSubgroup(G))'' in GAP). Hence if $G$ is 3-regular, then $(ab)^3a^{-3}b^{-3}=()$. But
\begin{align*}
(ab)^3a^{-3}b^{-3}=&  (1,5,15)(2,9,24)(3,12,29)(4,14,31)(6,18,37)(7,21,42)(8,23,44)(10,26,47)\\
&(11,28,49)(13,30,50)(16,34,55)(17,36,57)(19,39,60)(20,41,62)(22,43,63)\\
&(25,46,65)(27,48,66)(32,52,69)(33,54,71)(35,56,72)(38,59,74)(40,61,75)\\
&(45,64,76)(51,68,78)(53,70,79)(58,73,80)(67,77,81)
\end{align*}
The Id of this group in the library of SmallGroups \cite{Besche2002} is [81, 10].
\end{proof}

\begin{example}
  In the general classes of CCP-groups and ECP-groups are different. Let $ G=\langle a, b, c, d\rangle$ where
  \begin{align*}
  a=&(2,3)(6,7)(9,12)(13,16)(17,32,20,29)(18,30,19,31)(21,25,24,28)(22,27,23,26),\\
  b=&(1,17)(2,18)(3,19)(4,20)(5,21)(6,22)(7,23)(8,24)\\
  &(9,25)(10,26)(11,27)(12,28)(13,29)(14,30)(15,31)(16,32),\\ c=&(1,2)(3,4)(5,7)(6,8)(9,10)(11,12)(13,15)(14,16)\\
  &(17,18)(19,20)(21,23)(22,24)(25,26)(27,28)(29,31)(30,32),\\ d=&(1,8,4,5)(2,7,3,6)(9,16,12,13)(10,15,11,14)\\
  &(17,28,20,25)(18,27,19,26)(21,32,24,29)(22,31,23,30).
  \end{align*}
It can be checked with functions ``IsCCPGroup(G)'' and ``IsECPGroup(G)'' that $G$ is CCP-group not-ECP-group. The Id of this group in the library of SmallGroups \cite{Besche2002} is [128, 1760].
\end{example}

\begin{theorem}
Let $h$ be an $n$-left-Engel element of  a finite conjugate-permutable subgroup   $H$ of a group $G$. Let $t$ be a number of not necessary different prime divisors in a decomposition of $|H|$. Then $h$ is   $(n+t+1)$-left-Engel element of $G$.
\end{theorem}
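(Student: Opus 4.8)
The plan is to fix an arbitrary element $g\in G$ and prove that $[g,{}_{n+t+1}h]=1$; since the left-normed commutator $[g,{}_{m}h]$ makes sense for every $g$, this will give the theorem. Because $H$ is conjugate-permutable, $HH^{g}=H^{g}H$, so this product is a subgroup $K:=\langle H,H^{g}\rangle=HH^{g}$, and it is finite since $|K|=|H|^{2}/|H\cap H^{g}|$. The defining identities of conjugate-permutability hold for every element of $G$, in particular for every element of $K$, so $H$ is conjugate-permutable in $K$ as well; by the theorem of Foguel \cite{Foguel1997} that a conjugate-permutable subgroup of a finite group is subnormal, $H\trianglelefteq\trianglelefteq K$. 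Let $d$ be the defect of $H$ in $K$. Finally, the elementary identity $[g,h]=(h^{-1})^{g}h$ shows that $[g,h]\in H^{g}H=K$, which plants the commutator inside $K$.

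Next I would bound $d$. Take a subnormal chain $H=K_{0}\trianglelefteq K_{1}\trianglelefteq\cdots\trianglelefteq K_{d}=K$ of minimal length. If some inclusion were an equality, deleting the repeated term would yield a shorter such chain; hence all the inclusions are proper, and $d$ is at most the number of prime divisors of $[K:H]$ counted with multiplicity. Since $[K:H]=[H^{g}:H\cap H^{g}]$ divides $|H^{g}|=|H|$, this number is at most $t$, so $d\le t$. A straightforward induction on $j$, using $K_{d-j-1}\trianglelefteq K_{d-j}$ and the fact that $h\in H\le K_{d-j-1}$ is trivial modulo $K_{d-j-1}$, then shows that $[x,{}_{j}h]\in K_{d-j}$ for every $x\in K$ and every $0\le j\le d$; in particular $[x,{}_{d}h]\in K_{0}=H$ for all $x\in K$.

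Combining these, since $[g,h]\in K$ we obtain $[g,{}_{d+1}h]=[[g,h],{}_{d}h]\in H$. As $h$ is an $n$-left-Engel element of $H$, applying the $n$-Engel condition to this element of $H$ gives $[g,{}_{d+1+n}h]=1$; and since $d\le t$ and the vanishing of a left-normed commutator propagates to all longer ones, $[g,{}_{n+t+1}h]=1$. As $g\in G$ was arbitrary, $h$ is an $(n+t+1)$-left-Engel element of $G$.

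The step I expect to be the main obstacle is the defect estimate $d\le t$: one must first justify that conjugate-permutability descends to the finite overgroup $K=HH^{g}$ so that Foguel's subnormality theorem applies, and then use the combinatorial observation that a minimal subnormal chain between $H$ and $K$ consists of proper inclusions, so that its length is bounded by the number of prime divisors of $[K:H]$, which divides $|H|$. The remaining ingredients — the identity $[g,h]\in K$ and the descent of the iterated commutator along the chain — are routine; the only subtlety there is the extra unit in the bound, which accounts for the single step needed to carry $g\notin K$ into $K$.
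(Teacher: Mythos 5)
Your argument is correct, and it reaches the bound $n+t+1$ by a route that differs in substance from the paper's. The paper never forms the fixed overgroup $K=HH^{g}$ and never mentions subnormal defect; it instead tracks the descending chain of subgroups $HH^{[x,_{i}h]}$, using $[x,_{i}h]^{-1}=h^{-1}h^{[x,_{i-1}h]}\in HH^{[x,_{i-1}h]}$ to get $HH^{[x,_{i}h]}\leq HH^{[x,_{i-1}h]}$, and argues that within $t$ steps either some $[x,_{i}h]$ already lies in $H$ or the index $|H:H\cap H^{[x,_{t-1}h]}|$ has shrunk to a prime, so that $H$ is a maximal, hence normal, conjugate-permutable subgroup of $HH^{[x,_{t-1}h]}$ and therefore $[x,_{t+1}h]\in H$. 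You instead fix $K=HH^{g}$ once, invoke Foguel's theorem that conjugate-permutable subgroups of finite groups are subnormal, bound the defect by $d\leq t$ via the prime factors of $[K:H]$, which divides $|H|$, and push the iterated commutator down the fixed subnormal series one step per commutation with $h$. Both proofs ultimately rest on the same count of prime divisors of indices dividing $|H|$, but your defect bound is immediate because a minimal subnormal chain is proper at every step, whereas the paper's chain $HH^{[x,_{i}h]}$ must be seen to descend strictly at every step where $[x,_{i}h]\notin H$; in exchange, the paper's argument is lighter on imported machinery, needing only that a maximal conjugate-permutable subgroup is normal rather than the full subnormality theorem. The steps you single out as potential obstacles --- conjugate-permutability of $H$ in $K$, finiteness of $K$, and the descent $[x,_{j}h]\in K_{d-j}$ --- all check out.
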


\begin{proof}
  Let prove that $[x,_{t+1} h]\in H$ for every $x\in G$. Note that $[x,_{i} h]^{-1}=[h, [x,_{i-1} h]]=h^{-1}h^{[x,_{i-1} h]}\in HH^{[x,_{i-1} h]}$. Hence $[x,_{i} h]\in HH^{[x,_{i-1} h]}$. Therefore $HH^{[x,_{i} h]}$ is a  subgroup of $ HH^{[x,_{i-1} h]}$ for every $i\geq 1$ (recall that $[x,_0 h]=x)$. Hence  $|H\cap H^{[x,_{i-1} h]}|$ divides $|H\cap H^{[x,_{i} h]}|$ for every $i\geq 1$. 
  
  Assume that $[x,_{i} h]\not\in H$ for all $0\leq i< t$. Then $|H:H\cap H^{[x,_{t-1} h]}|$ is a prime. Hence $H$ is a maximal conjugate-permutable subgroup of $H H^{[x,_{t-1} h]}$. It means that $ H\trianglelefteq H H^{[x,_{t-1} h]}$. Hence $H^{[x,_{t} h]}=H$. Now $[x,_{t+1} h]\in H$. It means that   $[x,_{n+t+1} h]=[[x,_{t+1} h],_n h]=1$.
\end{proof}

{\small\bibliographystyle{siam}
\bibliography{ECP}}

\begin{thebibliography}{1}

\bibitem{ABB014}
{\sc A.~Ballester-Bolinches, E.~Cosme-Ll$\grave{o}$pez, and
  R.~{Esteban-Romero}}, {\em {GAP package permut --- A package to deal with
  permutability in finite groups, v. 2.0.3, 2018 }}, 2014.

\bibitem{Besche2002}
{\sc H.~U. Besche, B.~Eick, and E.~O'Brien}, {\em {GAP package SmallGrp --- The
  GAP Small Groups Library, v. 1.4.2 , 2020}}, 2002.

\bibitem{Foguel1997}
{\sc T.~Foguel}, {\em Conjugate-permutable subgroups}, J. Algebra, 191 (1997),
  pp.~235--239.

\bibitem{Foguel1999}
{\sc T.~Foguel}, {\em Groups with all cyclic subgroups conjugate-permutable
  groups}, J. Group Theory, 2 (1999), pp.~47--51.

\bibitem{Huppert1953}
{\sc B.~Huppert}, {\em {\"Uber das Produkt von paarweise vertauschbaren
  zyklischen Gruppen}}, {Math. Z.}, 58 (1953), pp.~243--264.

\bibitem{Xu2005}
{\sc M.~Xu and Q.~Zhang}, {\em On conjugate-permutable subgroups of a finite
  group}, Algebra Colloquium, 12 (2005), pp.~669--676.

\end{thebibliography}

\end{document}